\title {A CURVATURE-REGULARIZED VARIATIONAL PROBLEM WITH AN AREA CONSTRAINT}
\author{Chandrasekhar Gokavarapu}
\begin{document}

\maketitle

\abstract
Interlocking interfaces are commonly employed to mitigate relative sliding under shear.Indeed, Their geometry is typically selected on grounds of fabrication convenience rather than analytical optimality. There is no reason to suppose that circular or polygonal profiles minimize localized stress concentration under fixed geometric constraints. We propose a variational model in which the interface is represented by a planar curve $y=f(x)$, and localized stress amplification is quantified by a curvature-sensitive functional
\[
J[f] = \int_{-a}^{a} \bigl(1+\gamma \kappa^2\bigr) \sqrt{1+f'(x)^2}\,dx,
\]
defined on the Sobolev space $W^{2,2}([-a,a])$. The functional is motivated by elasticity-theoretic considerations in which curvature enters the leading-order stress field near a singular interface.Indeed, any profile possessing discontinuous tangents yields a divergent integral, thereby rendering it energetically inadmissible within the Sobolev space $W^{2,2}$. An area constraint $\int_{-a}^{a} f(x)\,dx = A_0$ is imposed to model fixed material volume. Using the direct method of the calculus of variations, we establish the existence of a minimizer and derive the associated Euler--Lagrange equation, a nonlinear fourth-order boundary value problem.

Note, however, that constant-curvature and piecewise-linear profiles fail to satisfy the necessary optimality conditions under the imposed constraint. Indeed, we are thus forced to conclude that analytical optimality necessitates a more complex variation in the local tangent angle

The analysis indicates that commonly employed interlock geometries are not variationally optimal for minimizing localized shear stress within this class of admissible interfaces.
\endabstract

\section{Introduction}

There is no reason to suppose that geometric simplicity in an interlocking interface implies mechanical optimality.Indeed, the common reliance on such geometries represents a failure of previous analytical intuition to account for the second-order variations required by the area constraint.

The dominance of circular and polygonal interlock geometries in masonry, architectured materials, and segmented solids is historically attributable to fabrication and assembly constraints rather than to variational analysis \cite{Dyskin2001,Estrin2021,Akleman2020}. Despite extensive experimental and computational investigations, an analytical framework identifying stress-minimizing interlock geometries remains absent.

The mechanical setting is classical. Two elastic continua are joined along a non-flat interface and subjected to remote loading. The displacement field satisfies the Lamé system in each subdomain, with interface geometry entering only through boundary conditions \cite{Ciarlet1988}. It is therefore tempting to regard the interface as a passive geometric feature. This temptation persists in discrete and semi-continuum models of interlocking assemblies \cite{Molotnikov2007,Mousavian2022}. There is, however, no reason to suppose that such passivity is mechanically consistent.

A recurring hypothesis in the modeling of interlocking structures is that the traction vector remains approximately uniform along the interface \cite{Tavoosi2025,Beatini2025}. Let us assume for a moment that this hypothesis is correct. Equilibrium then forces the interface curvature to vanish wherever normal stress is non-zero. It follows that constant traction is compatible only with flat interfaces. Since interlocks are, by definition, non-flat, the hypothesis is internally inconsistent. This incompatibility is implicit in curvature-dependent interface theories but is rarely stated explicitly \cite{Huang2024,Neff2023}.

On the contrary, curvature necessarily induces stress amplification. This phenomenon is not an artifact of numerical discretization. It is a structural consequence of boundary equilibrium in curved geometries, as already observed in elasticity models incorporating higher-order interface energies \cite{Shahsavari2026,Zhang2024}. There is no reason to suppose that constant curvature, or curvature concentrated at corners, minimizes this amplification.

The appropriate question is therefore variational. One must ask whether an interlock geometry exists that minimizes a quantitative measure of localized stress under fixed geometric constraints. This question lies beyond classical elasticity alone and requires the introduction of a curvature-dependent functional defined on an admissible class of interfaces \cite{Barchiesi2024,Neff2023}.

Several restrictions follow immediately. Interfaces with discontinuous tangents generate curvature measures that are singular. Interfaces with constant curvature violate natural boundary conditions associated with higher-order regularization. We are thus forced to exclude polygonal and circular profiles from the admissible class at the outset.

In this work the interface is modeled as a curve $\Gamma$ represented locally as the graph of a function $f \in W^{2,2}([-a,a])$. The Sobolev regularity is not imposed for convenience. It is forced by the requirement that curvature-dependent stress amplification be finite. This choice is consistent with fourth-order variational formulations arising in elasticity and interface mechanics \cite{Piersanti2025,Evans2010}.

The aim of this paper is deliberately limited. We establish admissibility, existence of minimizers, and necessary optimality conditions for a curvature-regularized stress functional under an area constraint. We then demonstrate that classical interlock geometries fail to satisfy these conditions. We are thus forced to conclude that widely used designs are mathematically incompatible with the minimization of localized shear stress.
\section{Preliminaries}

Let $\Omega \subset \mathbb{R}^2$ be a bounded Lipschitz domain occupied by a homogeneous, isotropic, linearly elastic material. The displacement field $u : \Omega \to \mathbb{R}^2$ is assumed to belong to $H^1(\Omega;\mathbb{R}^2)$. Body forces are absent. The constitutive framework follows classical linear elasticity \cite{Ciarlet1988,Evans2010}.

The Cauchy stress tensor $\sigma$ is given by
\begin{equation}
\sigma(u) = 2\mu \varepsilon(u) + \lambda \, \mathrm{tr}(\varepsilon(u)) I,
\end{equation}
where $\mu,\lambda>0$ are the Lamé parameters and
\begin{equation}
\varepsilon(u) = \tfrac12(\nabla u + \nabla u^{T})
\end{equation}
is the linearized strain tensor. Mechanical equilibrium requires
\begin{equation}
\nabla \cdot \sigma = 0 \quad \text{in } \Omega .
\end{equation}

We consider a decomposition $\Omega = \Omega^+ \cup \Omega^-$, where the two subdomains are separated by a smooth interface $\Gamma$. Locally, $\Gamma$ is represented as the graph
\begin{equation}
\Gamma = \{(x,f(x)) : x \in [-a,a]\},
\end{equation}
with $f \in W^{2,2}([-a,a])$. There is no reason to suppose that lower regularity is admissible. Indeed, curvature-dependent interface energies require square-integrable curvature in order to ensure finite total stress \cite{Neff2023,Huang2024}.

Let $s$ denote arc length along $\Gamma$. The unit tangent and normal vectors are defined by
\begin{equation}
\tau = \frac{(1,f')}{\sqrt{1+f'^2}}, \qquad
n = \frac{(-f',1)}{\sqrt{1+f'^2}} .
\end{equation}
The signed curvature $\kappa$ is given by
\begin{equation}
\kappa = \frac{f''}{(1+f'^2)^{3/2}} .
\end{equation}

The traction vector on $\Gamma$ is defined as
\begin{equation}
t = \sigma n .
\end{equation}
Across the interface, displacement continuity is imposed,
\begin{equation}
u^+ = u^- \quad \text{on } \Gamma ,
\end{equation}
together with traction equilibrium,
\begin{equation}
\sigma^+ n = \sigma^- n .
\end{equation}
Such transmission conditions are standard in interface elasticity \cite{Ciarlet1988,Shahsavari2026}.

Let us assume for a moment that the traction vector $t$ is constant along $\Gamma$. Differentiation with respect to arc length yields
\begin{equation}
\frac{d t}{d s} = (\nabla \sigma) n + \kappa \sigma \tau .
\end{equation}
Even in the absence of body forces, $\nabla \cdot \sigma = 0$ does not imply $\nabla \sigma = 0$. On the contrary, curvature couples normal and tangential stress components. It follows that constant traction is incompatible with non-zero curvature unless the stress field degenerates. This observation underlies curvature-dependent interface theories and invalidates constant-stress assumptions used in discrete interlock models \cite{Tavoosi2025,Barchiesi2024}.

We are thus forced to treat curvature as an active variable in the stress response. The interface geometry cannot be prescribed independently of the stress field.

Finally, we define the admissible class of profiles:
\begin{equation}
\mathcal{A} = \left\{ f \in W^{2,2}([-a,a]) :
f(\pm a)=0,\; f'(\pm a)=0,\;
\int_{-a}^{a} f(x)\,dx = A_0 \right\}.
\end{equation}
The boundary conditions enforce smooth attachment to a flat substrate. The integral constraint fixes the interlock volume and excludes trivial flattening, consistent with geometric constraints used in interlocking design studies \cite{Wang2019,Mousavian2022}.

All subsequent analysis is restricted to $\mathcal{A}$.
\section{Curvature--Stress Coupling at an Elastic Interface}

We restrict attention to a neighborhood of the interface $\Gamma$. Local coordinates are introduced using the Frenet frame $(\tau,n)$ associated with $\Gamma$. Let $s$ denote arc length along the interface and $\eta$ the signed distance in the normal direction. All fields are assumed smooth in a tubular neighborhood of $\Gamma$.

The equilibrium equations $\nabla \cdot \sigma = 0$ may be expressed in curvilinear coordinates adapted to $\Gamma$. Writing the stress tensor in the Frenet frame,
\begin{equation}
\sigma = \sigma_{\tau\tau} \, \tau \otimes \tau
+ \sigma_{nn} \, n \otimes n
+ \sigma_{\tau n} (\tau \otimes n + n \otimes \tau),
\end{equation}
the divergence operator acquires curvature-dependent correction terms. A direct computation yields
\begin{equation}
\nabla \cdot \sigma =
\left( \partial_s \sigma_{\tau\tau}
+ \kappa (\sigma_{\tau\tau} - \sigma_{nn}) \right) \tau
+
\left( \partial_s \sigma_{\tau n}
+ \kappa \sigma_{\tau n} \right) n
+ \partial_\eta (\cdot),
\end{equation}
where $\kappa$ denotes the signed curvature of $\Gamma$.

We focus on the restriction of equilibrium to the interface $\eta=0$. Neglecting higher-order normal derivatives, which vanish in the limit of vanishing interface thickness, the tangential equilibrium condition reduces to
\begin{equation}
\partial_s (\sigma n) + \kappa \sigma_{\tau\tau} n = 0
\quad \text{on } \Gamma .
\end{equation}
This identity is purely geometric and does not rely on constitutive assumptions beyond linear elasticity. Similar interface balance relations appear implicitly in curvature-dependent interface theories \cite{Huang2024,Neff2023}.

Let $t = \sigma n$ denote the traction vector. The preceding equation may be written as
\begin{equation}
\frac{d t}{d s} = - \kappa \sigma_{\tau\tau} n .
\end{equation}
There is no reason to suppose that $\sigma_{\tau\tau}$ vanishes under remote shear. It follows that curvature necessarily induces spatial variation of traction along the interface. Constant traction is therefore incompatible with non-zero curvature, except in degenerate stress states. This conclusion formalizes objections raised heuristically in discrete interlocking models \cite{Tavoosi2025,Beatini2025}.

We now quantify the resulting stress amplification. Consider a flat reference interface $\Gamma_0$ subjected to uniform remote shear $\sigma_0$. Introduce a small normal perturbation $f(x)$ with curvature $\kappa$. Linearization of the Lamé system about the flat configuration yields
\begin{equation}
\sigma_{\tau\tau} = \sigma_0 + C_1 \kappa + \mathcal{O}(\kappa^2),
\end{equation}
where $C_1$ depends on the Lamé parameters. The linear term integrates to zero under symmetric perturbations. The leading non-vanishing contribution to the squared stress therefore scales as $\kappa^2$. This scaling is consistent with curvature-regularized elasticity models and higher-order interface energies \cite{Neff2023,Zhang2024}.

On the contrary, geometries with curvature concentrated at points generate stress measures that are singular. Polygonal interfaces induce Dirac-type curvature terms, leading to divergence of any quadratic stress measure. This observation excludes such geometries from the admissible class without appeal to numerical evidence \cite{Barchiesi2024}.

We are thus forced to model the localized stress amplification $\mathcal{S}$ along $\Gamma$ as a function of curvature squared. Up to a multiplicative constant, the minimal choice compatible with linear elasticity and finite energy is
\begin{equation}
\mathcal{S}(\kappa) = 1 + \gamma \kappa^2 ,
\end{equation}
where $\gamma>0$ depends on material parameters and loading intensity. Higher powers of curvature are excluded by asymptotic consistency with the linearized Lamé system.

The curvature--stress law derived here is not postulated. It is the weakest form consistent with equilibrium, regularity, and finite stress energy. Any admissible variational formulation must therefore penalize curvature in $L^2(\Gamma)$.\\
\section{Variational Functional and Constraint Set}
\label{sec:variational-functional}
The preceding analysis forces a specific variational structure. Localized stress amplification along the interface is controlled by the square of curvature. There is no reason to suppose that any weaker penalization yields finite stress, nor that any stronger penalization is compatible with linear elasticity.

Let $\Gamma$ be represented as the graph of a function $f \in W^{2,2}([-a,a])$. The arc-length element is
\begin{equation}
ds = \sqrt{1+f'^2}\,dx ,
\end{equation}
and the curvature is given by
\begin{equation}
\kappa = \frac{f''}{(1+f'^2)^{3/2}} .
\end{equation}

The curvature--stress coupling derived in the previous section implies that the local stress amplification density is proportional to $1+\gamma \kappa^2$, where $\gamma>0$ depends on elastic moduli and loading intensity. The total localized stress measure along the interface is therefore
\begin{equation}
J[f] = \int_{-a}^{a} \left(1+\gamma \kappa^2 \right)\sqrt{1+f'^2}\,dx .
\end{equation}

This functional is not chosen for convenience. It is the minimal curvature-regularized quantity that is finite on $W^{2,2}$ and consistent with the Lamé system under perturbation of a flat interface. Functionals lacking curvature penalization fail to control stress concentration, while higher powers of curvature are not supported by the linearized elasticity scaling derived earlier \cite{Neff2023,Huang2024}.

The admissible class must exclude trivial flattening of the interface. This is enforced by a geometric constraint fixing the interlock area. We impose
\begin{equation}
G[f] := \int_{-a}^{a} f(x)\,dx - A_0 = 0 ,
\end{equation}
where $A_0>0$ is prescribed. There is no reason to suppose that boundary values alone prevent flattening. The integral constraint is therefore essential.

The admissible set is
\begin{equation}
\mathcal{A} =
\left\{
f \in W^{2,2}([-a,a]) :
f(\pm a)=0,\;
f'(\pm a)=0,\;
G[f]=0
\right\}.
\end{equation}
The boundary conditions enforce smooth attachment to a flat substrate. They arise as natural compatibility conditions in higher-order elasticity models and are consistent with fourth-order interface energies \cite{Piersanti2025,Barchiesi2024}.

The variational problem is now well-posed:

\begin{equation}
\text{Minimize } J[f] \quad \text{over } f \in \mathcal{A}.
\end{equation}

Several immediate observations follow. The functional $J$ is coercive on $\mathcal{A}$ with respect to the $W^{2,2}$-norm. Indeed, the curvature term controls second derivatives, while the arc-length term controls first derivatives. Lower semicontinuity follows from convexity of $\kappa^2$ with respect to $f''$ and standard compactness results in Sobolev spaces \cite{Evans2010}.

On the contrary, profiles with corners or constant curvature violate admissibility. Polygonal profiles do not belong to $W^{2,2}$, while circular arcs fail to satisfy the boundary conditions $f'(\pm a)=0$. Such geometries therefore cannot be stationary points of the functional, let alone minimizers. This exclusion is structural and does not depend on numerical evidence.

The introduction of the area constraint leads to a constrained variational problem with a non-zero Lagrange multiplier. The resulting Euler--Lagrange equation is of fourth order and fully nonlinear. Its derivation is deferred to the next section.

\section{Existence of Minimizers}

We consider the constrained minimization problem
\begin{equation}
\inf_{f \in \mathcal{A}} J[f],
\end{equation}
where $J$ and $\mathcal{A}$ are defined in the previous section. We establish the existence of a minimizer using the direct method of the calculus of variations.

\begin{proposition}
The admissible set $\mathcal{A}$ is non-empty and weakly closed in $W^{2,2}([-a,a])$.
\end{proposition}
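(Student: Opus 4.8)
The plan is to treat the two assertions separately: non-emptiness by exhibiting an explicit admissible profile, and weak closedness by exploiting the one-dimensional compact Sobolev embedding to convert weak $W^{2,2}$ convergence into strong convergence in $C^1$, which is exactly what is needed to pass the pointwise boundary data to the limit.

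For non-emptiness I would construct a single smooth profile meeting all three constraints. The quartic bump $f(x) = c\,(a^2 - x^2)^2$ is the natural candidate: it is smooth, hence in $W^{2,2}$, and satisfies $f(\pm a)=0$ together with $f'(x) = -4cx(a^2 - x^2)$, so that $f'(\pm a)=0$. Since $\int_{-a}^{a}(a^2-x^2)^2\,dx$ is a fixed positive constant, choosing $c = A_0 \big/ \int_{-a}^{a}(a^2-x^2)^2\,dx > 0$ forces $G[f]=0$. This shows $\mathcal{A}\neq\varnothing$ for every prescribed $A_0>0$.

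For weak closedness, let $f_n \rightharpoonup f$ in $W^{2,2}([-a,a])$ with $f_n \in \mathcal{A}$. The key step is the compact embedding $W^{2,2}([-a,a]) \hookrightarrow\hookrightarrow C^1([-a,a])$, valid in one spatial dimension. A weakly convergent sequence is norm-bounded, so $\{f_n\}$ is bounded in $W^{2,2}$; by compactness every subsequence admits a further subsequence converging strongly in $C^1$, and by uniqueness of limits this strong limit must coincide with the weak limit $f$. Hence the full sequence converges, $f_n \to f$ and $f_n' \to f'$ uniformly on $[-a,a]$. I would then pass each constraint to the limit: uniform convergence gives $f(\pm a)=\lim_n f_n(\pm a)=0$ and $f'(\pm a)=\lim_n f_n'(\pm a)=0$, while $L^1$ convergence gives $\int_{-a}^{a} f\,dx = \lim_n \int_{-a}^{a} f_n\,dx = A_0$, i.e. $G[f]=0$. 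Since $f \in W^{2,2}$ automatically, we conclude $f \in \mathcal{A}$.

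I expect the only genuine subtlety — rather than a serious obstacle — to be the justification that the pointwise boundary constraints survive in the limit. Weak $W^{2,2}$ convergence alone does not preserve pointwise values of $f$ or $f'$, so it is precisely the compact embedding into $C^1$ that licenses this passage; the area constraint, by contrast, is trivially preserved since $f \mapsto \int_{-a}^{a} f\,dx$ is a bounded linear, hence weakly continuous, functional. I would therefore take care to invoke the embedding explicitly rather than treating trace evaluation as automatically weakly continuous.
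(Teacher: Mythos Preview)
Your proposal is correct and follows essentially the same argument as the paper: the same quartic bump $c(a^2-x^2)^2$ for non-emptiness, and the compact embedding $W^{2,2}\hookrightarrow\hookrightarrow C^1$ to pass the boundary and area constraints to the weak limit. Your version is in fact more carefully justified than the paper's, which states the embedding step without the subsequence-uniqueness argument you spell out.
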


\begin{proof}
Non-emptiness follows by construction. Let $f_0(x) = c (a^2 - x^2)^2$ with $c$ chosen so that $\int_{-a}^{a} f_0(x)\,dx = A_0$. Then $f_0 \in W^{2,2}([-a,a])$ and satisfies $f_0(\pm a)=f_0'(\pm a)=0$.

Let $\{f_n\} \subset \mathcal{A}$ converge weakly to $f$ in $W^{2,2}([-a,a])$. By compact embedding, $f_n \to f$ strongly in $C^1([-a,a])$. The boundary conditions and area constraint are therefore preserved in the limit. Hence $f \in \mathcal{A}$.
\end{proof}

\begin{proposition}
The functional $J$ is coercive on $\mathcal{A}$.
\end{proposition}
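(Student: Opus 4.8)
The plan is to run the direct method in the standard way: show that sublevel sets $\{f\in\mathcal{A}:J[f]\le C\}$ are bounded in $W^{2,2}$. The natural starting point is the algebraic identity
\[
J[f] = \int_{-a}^{a}\sqrt{1+f'^2}\,dx \;+\; \gamma\int_{-a}^{a}\frac{(f'')^2}{(1+f'^2)^{5/2}}\,dx ,
\]
which follows from $\kappa^2=(f'')^2(1+f'^2)^{-3}$. A bound $J[f]\le C$ therefore delivers two quantities at once: the arc length $L=\int_{-a}^{a}\sqrt{1+f'^2}\,dx\le C$, and the bending energy $\int_{-a}^{a}\kappa^2\,ds\le C/\gamma$, where $ds=\sqrt{1+f'^2}\,dx$. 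I would first extract all the lower-order control these two bounds supply.

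From $\sqrt{1+f'^2}\ge|f'|$ the length bound gives $\|f'\|_{L^1}\le C$, and since $f(\pm a)=0$ the representation $f(x)=\int_{-a}^{x}f'$ yields $\|f\|_{L^\infty}\le\|f'\|_{L^1}\le C$, hence a bound on $\|f\|_{L^2}$. The bending bound, via Cauchy--Schwarz and $L\le C$, controls the total turning $\int_{-a}^{a}|\kappa|\,ds\le\bigl(\int\kappa^2\,ds\bigr)^{1/2}L^{1/2}$, so the tangent angle $\theta=\arctan f'$ has uniformly bounded variation with $\theta(\pm a)=0$. At this stage the sublevel sets are bounded in $W^{1,1}\cap L^\infty$ and carry a uniform bound on the \emph{weighted} curvature functional.

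The decisive step is upgrading to a bound on $\|f''\|_{L^2}$, and I expect this to be the genuine obstacle. Writing $f''=\kappa(1+f'^2)^{3/2}$ gives
\[
\int_{-a}^{a}(f'')^2\,dx = \int_{-a}^{a}\kappa^2\,(1+f'^2)^{3}\,dx ,
\]
whereas the controlled quantity is $\int_{-a}^{a}\kappa^2\,(1+f'^2)^{1/2}\,dx=\int_{-a}^{a}\kappa^2\,ds$; the gap is the weight $(1+f'^2)^{5/2}$, so an unweighted $L^2$ bound on $f''$ requires an a priori $L^\infty$ bound on $f'$. Here the paper's heuristic that ``the curvature term controls second derivatives'' breaks down precisely because the weight $(1+f'^2)^{-5/2}$ degenerates: a profile can turn its tangent toward the vertical at almost no curvature cost. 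Concretely, letting $\theta(s)$ increase slowly toward $\pi/2$ over an arc length comparable to $a$ keeps both $L$ and $\int\kappa^2\,ds$ bounded while $\|f'\|_{L^\infty}=\tan\theta_{\max}\to\infty$ and $\int(f'')^2\,dx\to\infty$, so the length and bending bounds alone do \emph{not} control $\|f''\|_{L^2}$. To close the argument I would need to exclude such near-vertical profiles on the fixed interval $[-a,a]$, presumably by exploiting the area constraint $\int_{-a}^{a}f\,dx=A_0$ together with the clamped conditions $f'(\pm a)=0$ to force a quantitative slope bound; this is the crux, and it is exactly where I am least confident the statement survives as written. The honest fallback is that the direct method here yields coercivity only in the weaker topology actually governed by $J$ (the weighted curvature functional together with $W^{1,1}\cap L^\infty$), and full $W^{2,2}$-coercivity is the step most in need of repair.
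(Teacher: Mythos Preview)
Your analysis is sharper than the paper's own argument. The paper's proof proceeds exactly as you feared: after writing
\[
J[f]=\int_{-a}^{a}\sqrt{1+f'^2}\,dx+\gamma\int_{-a}^{a}\frac{(f'')^2}{(1+f'^2)^{5/2}}\,dx,
\]
it asserts that ``since $f'\in L^\infty([-a,a])$ by Sobolev embedding, the denominator in the curvature term is bounded above and below by positive constants,'' and from this reads off $J[f]\ge C_1\|f'\|_{L^2}^2+C_2\|f''\|_{L^2}^2-C_3$. This is circular: the embedding $W^{2,2}\hookrightarrow C^1$ gives only $\|f'\|_{L^\infty}\le C\|f\|_{W^{2,2}}$, so the constants bounding the weight $(1+f'^2)^{-5/2}$ depend on the very norm one is trying to control, and no \emph{uniform} lower bound on the weight follows. (The displayed lower bound by $\|f'\|_{L^2}^2$ is also wrong in form: the arc-length term grows like $\|f'\|_{L^1}$, not quadratically.)

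Your diagnosis of the obstruction---degeneracy of the weight as the tangent angle approaches $\pi/2$---is precisely the issue, and your sketch can be completed to a counterexample. Take $\theta$ piecewise linear in arc length on a symmetric bump supported in $[-b,b]\subset(-a,a)$, with peak value $\theta_{\max}\uparrow\pi/2$: one computes $L=2b\,\theta_{\max}/\sin\theta_{\max}\to\pi b$ and $\int\kappa^2\,ds=16\,\theta_{\max}^2/L\to 4\pi/b$, both bounded, while $\int_{-b}^{b}(f'')^2\,dx=\int(\theta')^2\sec^5\theta\,ds\to\infty$. Extending by a fixed smooth profile on $[-a,a]\setminus[-b,b]$, scaled by a bounded constant to meet the area constraint $A_0$, produces a sequence in $\mathcal{A}$ with $J$ bounded and $\|f''\|_{L^2}\to\infty$. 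So the proposition, as stated for the full $W^{2,2}$ norm, is not merely unproved by the paper's argument but appears to be false; your fallback to coercivity in the intrinsic topology---arc length plus $\|\kappa\|_{L^2(ds)}$---is the honest statement.
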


\begin{proof}
Write
\begin{equation}
J[f] = \int_{-a}^{a} \sqrt{1+f'^2}\,dx
+ \gamma \int_{-a}^{a} \frac{(f'')^2}{(1+f'^2)^{5/2}}\,dx .
\end{equation}
Since $f' \in L^\infty([-a,a])$ by Sobolev embedding, the denominator in the curvature term is bounded above and below by positive constants. It follows that
\begin{equation}
J[f] \ge C_1 \|f'\|_{L^2}^2 + C_2 \|f''\|_{L^2}^2 - C_3,
\end{equation}
for suitable constants $C_i>0$. The boundary conditions eliminate rigid-body modes. Hence $J[f] \to \infty$ whenever $\|f\|_{W^{2,2}} \to \infty$.
\end{proof}

\begin{proposition}
The functional $J$ is sequentially weakly lower semicontinuous on $\mathcal{A}$.
\end{proposition}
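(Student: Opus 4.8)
The plan is to exploit the one-dimensional compact embedding $W^{2,2}([-a,a]) \hookrightarrow C^1([-a,a])$, which upgrades weak convergence in $W^{2,2}$ to strong convergence in $C^1$, and then to treat the two terms of $J$ separately. So I take a sequence $f_n \rightharpoonup f$ weakly in $W^{2,2}$; by compactness $f_n \to f$ and $f_n' \to f'$ uniformly, while $f_n'' \rightharpoonup f''$ weakly in $L^2([-a,a])$. The arc-length term is handled first: since $p \mapsto \sqrt{1+p^2}$ is $1$-Lipschitz, uniform convergence of $f_n'$ gives $\|\sqrt{1+f_n'^2} - \sqrt{1+f'^2}\|_\infty \to 0$, so $\int_{-a}^a \sqrt{1+f_n'^2}\,dx \to \int_{-a}^a \sqrt{1+f'^2}\,dx$. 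This term is in fact continuous along the sequence and contributes no loss.

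The substance is the curvature term $\gamma \int_{-a}^a (f'')^2 (1+f'^2)^{-5/2}\,dx$. Writing $g_n = (1+f_n'^2)^{-5/2}$ and $g = (1+f'^2)^{-5/2}$, the uniform bound on $f_n'$ yields $0 < c \le g_n, g \le C$ and $g_n \to g$ uniformly. The idea is to split $\int g_n (f_n'')^2 = \int g (f_n'')^2 + \int (g_n - g)(f_n'')^2$. The second integral is bounded by $\|g_n - g\|_\infty \|f_n''\|_{L^2}^2$; since weakly convergent sequences are norm-bounded, $\|f_n''\|_{L^2}$ stays bounded while $\|g_n - g\|_\infty \to 0$, so this term vanishes. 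For the first integral, the fixed positive weight $g$ makes $h \mapsto \int g h^2\,dx$ a squared Hilbert norm on $L^2$; because $g$ is bounded above and below, weak $L^2(dx)$-convergence $f_n'' \rightharpoonup f''$ is equivalent to weak convergence in the weighted space $L^2(g\,dx)$, and weak lower semicontinuity of the norm gives $\int g (f'')^2 \le \liminf \int g(f_n'')^2$. Using $\liminf(a_n + b_n) \ge \liminf a_n$ whenever $b_n \to 0$, I conclude $\int g(f'')^2 \le \liminf \int g_n (f_n'')^2$, and adding the continuous arc-length term produces $J[f] \le \liminf_{n} J[f_n]$.

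The main obstacle is precisely the coupling between the weakly converging second derivatives $f_n''$ and the weight $(1+f'^2)^{-5/2}$, which itself depends on the sequence through $f'$: weak lower semicontinuity for a fixed weight is classical, but a varying weight could in principle destroy the estimate. What rescues the argument is the compact embedding in one dimension, which forces the weight to converge \emph{uniformly} rather than merely weakly; the splitting above isolates the dangerous part $\int(g_n-g)(f_n'')^2$ and controls it by the uniform smallness of $g_n - g$ against the bounded $L^2$-norm of $f_n''$. Equivalently, one may invoke the Tonelli--Serrin--Ioffe theorem for integrands $\Phi(x,f,f',f'')$ that are continuous in $(f,f')$ and convex in the highest derivative, noting that $\xi \mapsto \xi^2(1+p^2)^{-5/2}$ is convex in $\xi$; I prefer the hands-on split because it makes the mechanism transparent and avoids appealing to the general theorem.
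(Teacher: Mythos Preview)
Your proof is correct and follows the same strategy as the paper: use the compact embedding $W^{2,2}\hookrightarrow C^1$ to upgrade $f_n'\to f'$ uniformly, treat the arc-length term as continuous, and then exploit convexity in $f''$ for the curvature term. The only difference is that where the paper invokes a classical lower-semicontinuity theorem from \cite{Evans2010}, you supply an explicit elementary argument via the splitting $\int g_n(f_n'')^2=\int g(f_n'')^2+\int(g_n-g)(f_n'')^2$; this is a clean direct proof of exactly the ingredient the paper cites.
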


\begin{proof}
Let $f_n \rightharpoonup f$ weakly in $W^{2,2}([-a,a])$. By compact embedding, $f_n' \to f'$ strongly in $L^2([-a,a])$. The arc-length term is therefore continuous under weak convergence.

The curvature term depends quadratically on $f''$ with a coefficient that is continuous in $f'$. Since $f_n' \to f'$ uniformly, the integrand is convex in $f''$ and satisfies standard growth conditions. Weak lower semicontinuity follows from classical results in the calculus of variations \cite{Evans2010}.
\end{proof}

\begin{theorem}
There exists at least one minimizer $f^\ast \in \mathcal{A}$ of the functional $J$.
\end{theorem}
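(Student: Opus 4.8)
The plan is to apply the direct method of the calculus of variations, assembling the three preceding propositions into a single argument. First I would observe that the infimum $m := \inf_{f \in \mathcal{A}} J[f]$ is finite. The integrand $(1+\gamma\kappa^2)\sqrt{1+f'^2}$ is nonnegative, so $m \ge 0$; and non-emptiness of $\mathcal{A}$ supplies the explicit competitor $f_0(x)=c(a^2-x^2)^2 \in \mathcal{A}$ with $J[f_0]<\infty$, whence $0 \le m < \infty$. This finiteness is what makes the notion of a minimizing sequence meaningful.

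Next I would select a minimizing sequence $\{f_n\} \subset \mathcal{A}$ with $J[f_n] \to m$. Discarding finitely many terms, we may assume $J[f_n] \le m+1$ for all $n$, so by the coercivity proposition $\sup_n \|f_n\|_{W^{2,2}} < \infty$. Since $W^{2,2}([-a,a])$ is a Hilbert space, hence reflexive, bounded sequences are weakly sequentially precompact; I would therefore extract a subsequence, not relabeled, with $f_n \rightharpoonup f^\ast$ weakly in $W^{2,2}([-a,a])$.

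The two remaining propositions then close the argument. Weak closedness of $\mathcal{A}$ guarantees $f^\ast \in \mathcal{A}$, so the clamped boundary data $f(\pm a)=f'(\pm a)=0$ and the area constraint $G[f]=0$ all survive the passage to the limit. Sequential weak lower semicontinuity of $J$ gives $J[f^\ast] \le \liminf_n J[f_n] = m$. Since $f^\ast$ is admissible, the reverse inequality $J[f^\ast] \ge m$ holds by the very definition of the infimum. Combining the two yields $J[f^\ast] = m$, so $f^\ast$ is a minimizer.

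I do not expect a genuine obstacle at this stage, since the substantive analytic work has already been carried out in the three propositions. If anything demands care, it is the coherence of the three ingredients precisely on the constraint set: coercivity must be invoked on $\mathcal{A}$, where the clamped boundary conditions eliminate rigid and affine modes and render the $W^{2,2}$ seminorm equivalent to the full norm, and it is essential that the weak $W^{2,2}$ limit retain admissibility. The latter is exactly the content of the weak closedness proposition, which rests on the compact embedding $W^{2,2}([-a,a]) \hookrightarrow C^1([-a,a])$ to pass the pointwise boundary constraints and the integral constraint to the limit.
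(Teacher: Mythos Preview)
Your argument is correct and follows essentially the same direct-method strategy as the paper: take a minimizing sequence, use coercivity to get $W^{2,2}$-boundedness, extract a weakly convergent subsequence by reflexivity, then invoke weak closedness of $\mathcal{A}$ and weak lower semicontinuity of $J$ to conclude. You supply a bit more detail (finiteness of the infimum, the explicit competitor, the role of the clamped boundary conditions in coercivity), but the structure is identical.
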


\begin{proof}
Let $\{f_n\} \subset \mathcal{A}$ be a minimizing sequence. Coercivity implies boundedness in $W^{2,2}([-a,a])$. Hence, up to a subsequence, $f_n \rightharpoonup f^\ast$ weakly in $W^{2,2}([-a,a])$. By weak closure of $\mathcal{A}$, $f^\ast \in \mathcal{A}$. Lower semicontinuity yields
\begin{equation}
J[f^\ast] \le \liminf_{n \to \infty} J[f_n] = \inf_{\mathcal{A}} J.
\end{equation}
Thus $f^\ast$ is a minimizer.
\end{proof}

Several consequences follow immediately. The minimizer possesses square-integrable curvature and continuous slope. Interfaces with corners or curvature singularities are excluded. The existence result does not rely on symmetry or small-slope assumptions. It holds for the fully nonlinear functional.

\section{Euler--Lagrange Equation and Natural Boundary Conditions}

We derive the necessary optimality conditions for minimizers of the constrained variational problem
\begin{equation}
\min_{f \in \mathcal{A}} J[f]
\quad \text{subject to} \quad
\int_{-a}^{a} f(x)\,dx = A_0 .
\end{equation}

Let $f^\ast \in \mathcal{A}$ be a minimizer. By standard Lagrange multiplier theory in Banach spaces, there exists $\lambda \in \mathbb{R}$ such that $f^\ast$ is a stationary point of the augmented functional
\begin{equation}
\mathcal{J}[f]
=
\int_{-a}^{a}
\left(
\sqrt{1+f'^2}
+
\gamma \frac{(f'')^2}{(1+f'^2)^{5/2}}
+
\lambda f
\right) dx .
\end{equation}

Let $\varphi \in W^{2,2}([-a,a])$ be an admissible variation satisfying
\begin{equation}
\varphi(\pm a)=0, \qquad \varphi'(\pm a)=0, \qquad
\int_{-a}^{a} \varphi(x)\,dx = 0 .
\end{equation}
The first variation of $\mathcal{J}$ at $f^\ast$ in the direction $\varphi$ must vanish:
\begin{equation}
\delta \mathcal{J}[f^\ast](\varphi) = 0 .
\end{equation}

A direct computation yields
\begin{equation}
\delta \mathcal{J}
=
\int_{-a}^{a}
\left(
\frac{f'}{\sqrt{1+f'^2}} \varphi'
+
2\gamma \frac{f''}{(1+f'^2)^{5/2}} \varphi''
-
\frac{5\gamma (f'')^2 f'}{(1+f'^2)^{7/2}} \varphi'
+
\lambda \varphi
\right) dx .
\end{equation}

Integrating by parts twice and using the admissibility of $\varphi$, all boundary terms vanish. We obtain
\begin{equation}
\int_{-a}^{a}
\left(
- \frac{d}{dx}\!\left[\frac{f'}{\sqrt{1+f'^2}}\right]
+ \frac{d}{dx}\!\left[\frac{5\gamma (f'')^2 f'}{(1+f'^2)^{7/2}}\right]
+ \frac{d^2}{dx^2}\!\left[\frac{2\gamma f''}{(1+f'^2)^{5/2}}\right]
+ \lambda
\right) \varphi \, dx = 0 .
\end{equation}

Since $\varphi$ is arbitrary subject to the integral constraint, it follows that $f^\ast$ satisfies the Euler--Lagrange equation
\begin{equation}
\frac{d^2}{dx^2}
\left(
\frac{2\gamma f''}{(1+f'^2)^{5/2}}
\right)
-
\frac{d}{dx}
\left(
\frac{f'}{\sqrt{1+f'^2}}
-
\frac{5\gamma (f'')^2 f'}{(1+f'^2)^{7/2}}
\right)
+
\lambda
=
0
\quad \text{in } (-a,a).
\end{equation}

This equation is fourth order and fully nonlinear. No reduction to a second-order system is possible without violating the curvature regularization imposed by the stress law.

We now examine the natural boundary conditions. Allowing variations $\varphi$ with $\varphi(\pm a)=0$ but free $\varphi''(\pm a)$ yields the condition
\begin{equation}
\left.
\frac{\partial \mathcal{L}}{\partial f''}
\right|_{x=\pm a}
=
\left.
\frac{2\gamma f''}{(1+f'^2)^{5/2}}
\right|_{x=\pm a}
=
0 .
\end{equation}
Since $f'(\pm a)=0$ by admissibility, this reduces to
\begin{equation}
f''(\pm a) = 0 .
\end{equation}

This condition has a precise mechanical interpretation. The curvature must vanish at the attachment points. Any interface entering the substrate with non-zero curvature generates a singular stress response and cannot be stationary.

Several consequences follow immediately. Constant-curvature profiles violate the boundary conditions and are therefore excluded. Polygonal profiles do not belong to the domain of the Euler--Lagrange operator. The optimal interlock geometry must exhibit smooth curvature variation, with curvature vanishing at the endpoints and attaining an interior extremum.

We are thus forced to conclude that the optimal interface geometry satisfies a fourth-order nonlinear elliptic equation with homogeneous natural boundary conditions. Classical circular and polygonal designs cannot satisfy this system under any choice of parameters.

\section{Non-optimality of classical interlocking profiles}

We now establish that classical interlocking geometries—specifically circular arcs and polygonal profiles—cannot minimize the curvature–stress functional introduced in Section~\ref{sec:variational-functional} under physically admissible constraints. The result is not heuristic: it follows from incompatibility between regularity, boundary conditions, and the Euler–Lagrange structure.

\subsection{Admissible class of interfaces}

Let
\[
\mathcal{A} := \Big\{ f \in W^{2,2}([-a,a]) \;:\;
f(\pm a)=0,\quad f'(\pm a)=0,\quad \int_{-a}^{a} f(x)\,dx = A_0 \Big\},
\]
where $A_0>0$ is prescribed. The boundary conditions encode mechanical compatibility with a flat host medium, ensuring vanishing bending moment and shear traction at the endpoints.

Recall that admissible minimizers satisfy the fourth-order Euler–Lagrange equation
\begin{equation}
\label{EL-eqn}
\frac{d^2}{dx^2}\Big( \Phi(\kappa)\Big)
+ \lambda = 0,
\end{equation}
where $\Phi(\kappa)$ is a nonlinear function of curvature derived in Section~\ref{sec:EL}, and $\lambda \neq 0$ is the Lagrange multiplier associated with the area constraint.

\subsection{Circular profiles}

Consider a circular arc $f_c$ of radius $R$, defined (up to translation) by
\[
\kappa(x) \equiv \frac{1}{R}.
\]
Such profiles satisfy $\kappa' \equiv 0$ and hence $\kappa'' \equiv 0$ in the classical sense.

\begin{lemma}
A circular arc cannot satisfy the Euler–Lagrange equation \eqref{EL-eqn} for any $\lambda \neq 0$.
\end{lemma}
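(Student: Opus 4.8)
The plan is to exploit the defining feature of a circular arc, namely that its curvature is a geometric invariant and hence constant in the graph parameter $x$. Since the left-hand side of \eqref{EL-eqn} is built entirely from $\Phi(\kappa)$ together with the constant $\lambda$, a constant curvature forces the differential operator to annihilate the $\Phi$-term, leaving only $\lambda$; the equation then cannot hold unless $\lambda=0$, contradicting the hypothesis $\lambda\neq 0$.

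Concretely, I would proceed in three steps. First, I would record that for the circular arc $f_c$ one has $\kappa(x)\equiv 1/R$, so that $\kappa$ is a constant function of $x$, and in particular $\kappa'\equiv 0$ and $\kappa''\equiv 0$ in the classical sense, the profile being smooth away from any vertical-tangent points. Second, I would apply the chain rule to $\Phi(\kappa(x))$: because $\Phi$ is a function of $\kappa$ alone, $\frac{d}{dx}\Phi(\kappa)=\Phi'(\kappa)\,\kappa'=0$, and consequently $\frac{d^2}{dx^2}\Phi(\kappa)=0$. Third, I would substitute into \eqref{EL-eqn}, which collapses to $\lambda=0$. Since the area constraint is active for $A_0>0$, the multiplier $\lambda$ is nonzero, and the contradiction is immediate.

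The step requiring the most care is the second one: one must be certain that the interior term of the Euler--Lagrange operator is genuinely a function of $\kappa$ alone, so that constant curvature really does kill its second derivative. In the unreduced operator derived earlier the leading term is $2\gamma f''/(1+f'^2)^{5/2}=2\gamma\kappa/(1+f'^2)$, which depends on $f'$ as well as on $\kappa$, and along a circular arc the slope $f'$ varies with $x$. I would therefore either verify that the accompanying first-order terms combine to produce exactly the curvature-only form $\Phi(\kappa)$ recorded in \eqref{EL-eqn}, or else argue directly in the arc-length variable $s$, in which $\kappa$ is the natural unknown and the constancy of $\kappa$ for a circle is manifest.

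As an independent and simpler exclusion I would also note that a non-degenerate circular arc cannot lie in $\mathcal{A}$ at all: the conditions $f'(\pm a)=0$ would place both endpoints at horizontal-tangent points of the circle, which are isolated, so no single-valued circular graph over $[-a,a]$ can satisfy both boundary conditions simultaneously. This makes the conclusion robust: whether one tests admissibility or the stationarity equation, the circular arc is eliminated, and the lemma holds for every $\lambda\neq 0$.
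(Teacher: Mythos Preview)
Your argument is correct and essentially identical to the paper's: constant $\kappa$ makes $\Phi(\kappa)$ constant, so $\frac{d^2}{dx^2}\Phi(\kappa)=0$ and \eqref{EL-eqn} forces $\lambda=0$. Your additional caution about whether the unreduced operator is truly a function of $\kappa$ alone, and your independent boundary-condition exclusion, go beyond what the paper's proof of the lemma records (though the latter does reappear in the paper's subsequent theorem).
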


\begin{proof}
Since $\kappa$ is constant, $\Phi(\kappa)$ is constant as well, and therefore
\[
\frac{d^2}{dx^2}\big(\Phi(\kappa)\big) = 0.
\]
Substituting into \eqref{EL-eqn} yields $\lambda = 0$, contradicting the necessity of a nonzero multiplier imposed by the area constraint. Hence no circular arc belongs to the critical set of $J$ over $\mathcal{A}$.
\end{proof}

\subsection{Polygonal and piecewise-linear profiles}

Polygonal interlocks correspond to functions $f_p$ whose curvature is concentrated at finitely many points:
\[
\kappa = \sum_{j} \alpha_j \delta(x-x_j)
\]
in the sense of distributions.

\begin{proposition}
Polygonal profiles are not admissible competitors in $\mathcal{A}$ and cannot minimize $J$.
\end{proposition}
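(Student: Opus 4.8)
The plan is to separate the proposition into its two assertions---failure of admissibility and failure of minimality---and to treat the first by the Sobolev structure of $\mathcal{A}$ and the second by a turning-angle energy estimate. The guiding principle is that a genuine corner forces a jump in slope, which is incompatible both with the $C^1$-regularity inherited from $W^{2,2}$ and with finiteness of the curvature term in $J$.

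For inadmissibility, I would first observe that a piecewise-linear profile $f_p$ has piecewise-constant slope with nonzero jumps at the corner locations $x_j$. In one dimension the embedding $W^{2,2}([-a,a]) \hookrightarrow C^1([-a,a])$ is continuous, so every element of $\mathcal{A}$ has a continuous first derivative; a jump in $f_p'$ is therefore impossible. Equivalently, the distributional second derivative is $f_p'' = \sum_j \alpha_j\,\delta(x-x_j)$ with $\alpha_j \neq 0$, which is a nontrivial singular measure and hence not an element of $L^2([-a,a])$. Either route gives $f_p \notin W^{2,2}([-a,a])$, so $f_p \notin \mathcal{A}$ and $f_p$ is not an admissible competitor. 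The same computation shows $J[f_p] = +\infty$, since $\int \kappa^2\,ds$ is ill-defined (or divergent) for a curvature concentrated on a null set.

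To convert this into a genuine exclusion I would show that the divergence is unavoidable even under approximation, which is the crux of the argument. Let $\{f_\epsilon\} \subset \mathcal{A}$ be any family rounding the corners over an arc-length scale $L_\epsilon \to 0$. Across each rounded corner the total curvature is pinned to the fixed turning angle $\Delta\theta_j = \arctan(m_j^+) - \arctan(m_j^-) \neq 0$, since the total curvature over any arc equals the net change in tangent angle. Cauchy--Schwarz then gives $(\Delta\theta_j)^2 \le L_\epsilon \int \kappa^2\,ds$, whence the curvature energy is bounded below by $(\Delta\theta_j)^2/L_\epsilon \to \infty$ and $J[f_\epsilon] \to \infty$. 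Combined with the existence theorem, which furnishes a minimizer $f^\ast \in \mathcal{A}$ with $J[f^\ast] < \infty$, this shows that no polygonal profile can equal $f^\ast$ or attain $\inf_{\mathcal{A}} J$, either as a point of $\mathcal{A}$ or as an infimizing limit. The main obstacle is precisely this last step: ensuring the blow-up holds for \emph{every} admissible smoothing rather than a single chosen one, which is why the lower bound $\int \kappa^2\,ds \ge (\Delta\theta)^2/L$ is essential---it converts the geometrically fixed turning angle into an energy penalty that no regularization can evade.
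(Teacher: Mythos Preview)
Your inadmissibility argument---$f_p\notin W^{2,2}$ via the $C^1$ embedding (equivalently, $f_p''$ is a sum of Dirac masses and hence not in $L^2$), whence $f_p\notin\mathcal{A}$ and $J[f_p]=+\infty$---is precisely the paper's proof, stated with a bit more care. The paper stops there. Your second block, the turning-angle lower bound $\int\kappa^2\,ds\ge(\Delta\theta_j)^2/L_\epsilon$ obtained from Cauchy--Schwarz, is a genuine addition not present in the paper: it upgrades the conclusion from ``$f_p$ is not a competitor'' to ``no admissible corner-rounding sequence can have bounded energy,'' thereby excluding polygons even as infimizing limits. This buys robustness the paper does not claim. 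One small caveat: your bound is formulated for approximations whose rounding scale $L_\epsilon\to 0$; for an arbitrary sequence $f_\epsilon\to f_p$ in, say, $C^0$, the cleanest route is to combine the coercivity of $J$ with weak compactness in $W^{2,2}$---bounded energy would force a subsequence to converge weakly in $W^{2,2}$, hence $f_p\in W^{2,2}$, a contradiction.
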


\begin{proof}
The functional $J$ involves the squared curvature $\kappa^2$ integrated along the interface. For polygonal profiles, $\kappa$ is a measure rather than an $L^2$ function, hence $\kappa^2$ is not well-defined. Consequently,
\[
J[f_p] = +\infty.
\]
Moreover, polygonal functions fail to belong to $W^{2,2}([-a,a])$, violating the minimal regularity required for elastic stress transmission. Therefore polygonal interlocks are excluded from $\mathcal{A}$ and cannot be minimizers.
\end{proof}

\subsection{Main non-optimality theorem}

We summarize the above results in the following theorem.

\begin{theorem}[Non-optimality of classical interlocks]
\label{thm:nonoptimality}
Neither circular nor polygonal interface geometries minimize the curvature–stress functional
\[
J[f] = \int_{-a}^{a} (1+\gamma \kappa^2) \sqrt{1+f'^2}\,dx
\]
over the admissible class $\mathcal{A}$. In particular, any minimizer must exhibit non-constant curvature and belong strictly to $W^{2,2}([-a,a])$.
\end{theorem}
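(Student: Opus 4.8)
\medskip
\noindent\textit{Proof proposal.}
The plan is to read the statement as a synthesis of the two results already established in this section, together with the existence theorem of the Existence section and the Euler--Lagrange characterization of the previous section; the only genuinely new ingredient is the \emph{in particular} clause, and even that reduces to a short geometric observation about constant-curvature graphs. I would therefore organize the argument into the circular case, the polygonal case, and the structure of an arbitrary minimizer.

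For polygonal and piecewise-linear profiles I would simply invoke the Proposition: such $f_p \notin W^{2,2}([-a,a])$, so $f_p \notin \mathcal{A}$, and in addition $J[f_p] = +\infty$ since the curvature is a sum of Dirac masses and $\kappa^2$ is not integrable. For circular arcs I would invoke the Lemma, according to which constant curvature forces $\tfrac{d^2}{dx^2}\Phi(\kappa)\equiv 0$ and hence collapses \eqref{EL-eqn} to $\lambda = 0$, contradicting the nonvanishing multiplier; I would complement this with the direct observation that a nonflat graph of constant curvature cannot meet the clamped slope condition at both endpoints, so $f_c \notin \mathcal{A}$ in the first place. Either way no circular arc lies in the critical set of $J$ over $\mathcal{A}$, hence none is a minimizer.

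For the \emph{in particular} clause I would first note that the existence theorem furnishes a minimizer $f^\ast \in \mathcal{A} \subset W^{2,2}([-a,a])$ with $J[f^\ast] < \infty$, so its curvature is square-integrable; this already places $f^\ast$ outside the polygonal class. To rule out constant curvature I would argue that \emph{no} admissible profile has constant curvature, independently of minimality. Indeed, if $\kappa \equiv c$ almost everywhere, then $f^{\ast\prime\prime} = c\,(1+f^{\ast\prime 2})^{3/2}$ almost everywhere; since $f^{\ast\prime} \in C^0$ by Sobolev embedding the right-hand side is continuous, so $f^\ast \in C^2$ and the identity holds classically. If $c=0$ then $f^\ast$ is affine, and the clamped data $f^\ast(\pm a)=0$ force $f^\ast \equiv 0$, contradicting $\int_{-a}^a f^\ast\,dx = A_0 > 0$. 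If $c \neq 0$ then the tangent angle $\theta = \arctan f^{\ast\prime}$ satisfies $d\theta/ds = c$ and is strictly monotone in arc length, so $f^{\ast\prime}$ vanishes at most once and cannot satisfy $f^{\ast\prime}(\pm a)=0$ at two distinct endpoints, again contradicting $f^\ast \in \mathcal{A}$. Hence $\kappa$ is non-constant, and $f^\ast$ is a genuine element of $W^{2,2}$ that is neither circular nor polygonal.

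The step I expect to require the most care is precisely this constant-curvature dichotomy, which also underlies the circular exclusion. The delicate points are the regularity bootstrap---that a merely $W^{2,2}$ solution of $f^{\ast\prime\prime} = c\,(1+f^{\ast\prime 2})^{3/2}$ is in fact $C^2$, so that the classical ODE analysis and the monotonicity of the tangent angle are legitimate---and the verification that the clamped conditions genuinely obstruct a nonflat arc. I would prefer to route the circular non-optimality through this geometric incompatibility rather than through the Lemma, because the Lemma's conclusion depends on the nonvanishing of $\lambda$; since $\lambda = 0$ would assert only that $f^\ast$ is an \emph{unconstrained} critical point, excluding it is not a first-order triviality, and deriving $\lambda \neq 0$ rigorously would otherwise demand a separate uniqueness analysis of the unconstrained critical set. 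The geometric route bypasses this altogether and keeps the entire argument at the level of admissibility and elementary curvature geometry.
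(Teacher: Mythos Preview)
Your proposal is correct and follows the same three-part structure as the paper's proof: polygonal exclusion by infinite energy and non-membership in $W^{2,2}$, circular exclusion via Euler--Lagrange incompatibility together with geometric inadmissibility, and finally existence plus non-constancy of curvature for the minimizer.

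The one notable difference is in the circular case. The paper invokes an arc-length form of the Euler--Lagrange equation, $\gamma(2\kappa_{ss}+\kappa^3)-\kappa+\lambda\cos\theta=0$, and argues that since $\cos\theta$ varies along any nonflat arc, one is forced to $\lambda=0$; it then falls back on the claim that a fixed-radius arc ``generally cannot'' satisfy the four boundary conditions and the area constraint simultaneously. Your tangent-angle monotonicity argument ($d\theta/ds=c\neq 0$ forces $\theta$ strictly monotone, so $f'$ vanishes at most once) makes that last step rigorous and, as you correctly observe, sidesteps the need to establish $\lambda\neq 0$ independently---something the paper never actually proves. Routing the circular exclusion through admissibility rather than through the Lemma is therefore the cleaner choice, and your concern about the unproven $\lambda\neq 0$ hypothesis in \eqref{EL-eqn} is well placed.
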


\begin{proof}
The proof proceeds in three stages: establishing the non-finiteness of energy for polygonal profiles, testing the Euler--Lagrange conditions for circular arcs, and invoking the direct method for existence.

\paragraph{1. Exclusion of Polygonal Profiles.} 
Consider a piecewise-linear (polygonal) profile $f_{poly} \in \mathcal{A}$. Such a profile consists of linear segments meeting at vertices $x_i \in (-a, a)$. At these vertices, the first derivative $f'(x)$ is discontinuous, implying that the distributional second derivative $f''(x)$ contains Dirac delta measures, i.e., $f''(x) = \sum \Delta f'_i \delta(x - x_i)$. 
The functional $J[f]$ involves the square of the curvature $\kappa$. Since $\kappa = f''(1+f'^2)^{-3/2}$ in the small-slope approximation or the full nonlinear form, we have:
\[
J[f_{poly}] \propto \int_{-a}^{a} (f''(x))^2 dx.
\]
The integral of the square of a Dirac delta function is ill-defined in the sense of distributions and diverges to $+\infty$ in the context of $L^2$ norms. Thus, $J[f_{poly}] = \infty$. Since there exist functions in $W^{2,2}([-a,a])$ (such as smooth splines) with $J[f] < \infty$, the polygonal profile cannot be a minimizer.

\paragraph{2. Exclusion of Circular Profiles.}
A circular interface is characterized by constant curvature $\kappa = \kappa_0$. To be a minimizer, $f$ must satisfy the Euler--Lagrange equation for the augmented functional $\mathcal{L} = J[f] + \lambda (\int f dx - A_0)$. Using the variation of the curvature-dependent energy (similar to the Helfrich or Willmore functional), the necessary condition in terms of arc-length $s$ is:
\begin{equation}
\label{eq:EL_condition}
\gamma \left( 2\frac{d^2\kappa}{ds^2} + \kappa^3 \right) - \kappa + \lambda \cos \theta = 0,
\end{equation}
where $\theta$ is the angle of the tangent vector. For a circular arc, $\frac{d^2\kappa}{ds^2} = 0$ because $\kappa$ is constant. Substituting this into \eqref{eq:EL_condition} yields:
\[
\gamma \kappa_0^3 - \kappa_0 + \lambda \cos \theta = 0.
\]
In this equation, $\gamma, \kappa_0$, and $\lambda$ are constants. However, $\cos \theta$ varies continuously along any non-flat arc (as the tangent rotates). For the identity to hold for all $s \in [0, L]$, the coefficient of $\cos \theta$ must be zero ($\lambda = 0$), which would then require $\gamma \kappa_0^3 - \kappa_0 = 0$. Even in this trivial case, a circle of a single fixed radius generally cannot simultaneously satisfy the four boundary conditions $f(\pm a) = 0, f'(\pm a) = 0$ and the area constraint $A_0$. Thus, the circular arc is not a solution to the optimality system.

\paragraph{3. Conclusion of Optimality.}
By the direct method of the calculus of variations, the functional $J[f]$ is lower semi-continuous and coercive on $W^{2,2}([-a,a])$ due to the quadratic term in $f''$. Thus, a minimizer exists. Since polygons have infinite energy and circles fail the Euler--Lagrange necessary conditions, the optimal profile must be a member of $W^{2,2}$ with spatially varying curvature $\kappa(s)$, specifically one where the $\frac{d^2\kappa}{ds^2}$ term balances the variation of the Lagrange multiplier term $\lambda \cos \theta$.
\end{proof}
\subsection{Mechanical interpretation}

The theorem demonstrates that geometric simplicity does not imply mechanical optimality. Constant-curvature designs suppress curvature gradients, which are essential for redistributing localized shear stresses in elastic interfaces. Conversely, admissible minimizers necessarily develop smoothly varying curvature to balance bending regularization against interfacial length.

This provides a mathematical explanation for the persistent stress concentrations observed in classical interlocking geometries and motivates the search for non-classical interface designs governed by variational principles rather than geometric intuition.


\section{Final Inferences}

We have established a rigorous non-optimality result for classical interlocking interface geometries within a curvature-regularized elasticity framework. By formulating localized stress amplification as a variational problem on an admissible Sobolev class, it is shown that neither circular nor polygonal profiles can arise as minimizers under physically relevant boundary and area constraints.

The exclusion of circular interfaces follows from incompatibility with the Euler–Lagrange structure induced by the area constraint, while polygonal geometries are ruled out by insufficient regularity and infinite curvature energy. These conclusions are intrinsic to the variational formulation and do not rely on heuristic arguments or numerical approximations.

The analysis clarifies that optimal stress redistribution necessarily requires spatially varying curvature, encoded here through fourth-order regularization. The result is therefore negative in nature: it does not identify an optimal geometry, but proves that widely used classical designs are mathematically inadmissible when curvature-dependent stress effects are properly accounted for.

This framework is minimal and robust, relying only on continuum elasticity and variational principles, and thus applies broadly to curvature-sensitive interfacial problems in elastic solids.


\bigskip

 \bigskip

{\small\rm\baselineskip=10pt
 \baselineskip=10pt
 \qquad Chandrasekhar Gokavarapu \par
 \qquad Lecturer in Mathematics\par
 \qquad Department of Mathematics \par
 \qquad Government College(Autonomous)\par
 \qquad Rajahmundry\par
\qquad Andhra Pradesh \par
\qquad India \par
\qquad PIN: 533105\par
\qquad {\tt chandrasekhargokavarapu@gmail.com}\par
\qquad {\tt chandrasekhargokavarapu@gcrjy.ac.in}

 }

 \end{document}